\newtheorem{theorem}{Theorem}[section]
\newtheorem{proposition}[theorem]{Proposition}
\newtheorem{corollary}[theorem]{Corollary}
\newtheorem{definition}[theorem]{Definition}
\newcommand{\power}[1]{{\scaleobj{0.8}{(#1)}}}
\newcommand{\mo}{\power{-1}}
\author{Ian Hawthorn}
\title{Sandwich Structures from Arbitrary Functions in Group Theory}
\thanks{Thanks to Dr. Tim Stokes for advice and for introducing me to prover9}
\subjclass[2010]{Primary: 20N99}
\begin{document}
\begin{abstract} 
Functions between groups with the property that all function conjugates 
are inverse preserving are called sandwich morphisms.  These maps preserve 
a structure within the group known as the sandwich structure. 
Sandwich structures are left distributive idempotent left involutary magmas.
These provide a generalisation of groups which we call a sandwich. This paper 
explores sandwiches and their relationship to groups.
\end{abstract}

\maketitle
\section{Introduction}

Group homomorphisms are maps between groups that preserve group structure.
In earlier papers (\cite{paper1},\cite{nilseries}) the author has looked 
at arbitrary functions between groups which are not in general homomorphisms,
but which in some sense can be said to partially preserve group structure.

The study of such functions is closely related to the study of generalisations
of groups. Given a generalisation of groups, the functions between groups 
which are morphisms for that generalisation will give an interesting collection
of not-quite-homomorphisms. Conversely given a collection of 
not-quite-homomorphisms it is natural to look at which properties of groups 
they preserve. A generalisation of groups may then be obtained by considering 
algebraic structures with those properties.

Not all interesting sets of arbitrary functions will give a group 
generalisation in this fashion. But when this can be done it may lead 
us to generalisations of groups that we might not otherwise have considered.

In this paper we take a fairly interesting class of functions between groups,
namely those for which all function conjugates are inverse preserving, and we 
use this set of functions to obtain a generalisation of groups.

\bigskip
\pagebreak[2]

\section{Function Conjugation and Inverse Preserving Functions}

A notion of function conjugation was introduced in~\cite{paper1}. 

If \( f : G \rightarrow H \) is an arbitrary function between 
finite groups and \( a \in G \), then we define a new function
\( f^a(x) = f(a)^{-1}f(ax) \) which we call \textbf{conjugate} of 
\( f \) by \( a \).

Clearly \( f \) is a group homomorphism if and only if \( f^a = f \) for 
all \( a \in G \). Consider for example the inverse function 
\( \mo : g \mapsto g^{-1} \). Then \( \mo^a(x) = ax^{-1}a^{-1} = [\mo(x)]^a \),
hence function conjugation generalises the usual conjugate. 

Note that \( f^a(1) = 1 \), hence conjugation maps the set of all functions 
onto the set of identity preserving ones. Furthermore function conjugation 
defines a group action of \( G \) on the set of identity preserving functions
mapping from \( G \) to \( H \) since \( f^1 = f \) and \( (f^a)^b = f^{ab} \).
Homomorphisms are precisely the functions invariant under this action.

\medskip
This function action was defined from the left. A similar action can be defined
from the right. We introduce temporary notation \( f^{|x>}(g) = f^x(g) \) for 
the action from the left and define action from the right by 
\( f^{<x|}(g) = f(gx^{-1})f(x^{-1})^{-1} \).
Then \( f^{<1|} = f \) and \( (f^{<a|})^{<b|} = f^{<ab|} \) so this 
is indeed an action.

We might expect that the actions from the left and right are related.
In fact they are equivalent. An intertwining map is given by
\( f^{\mo}(x) = f(x^{-1})^{-1} \). Note that 
\( {\left(f^\mo\right)}^\mo  = f \) hence \( f \mapsto f^\mo \) 
is of order two and in particular is a bijection. One can easily check that
\begin{flalign*}
(f^{<x|})^{\mo} &= (f^{\mo})^{|x>}  \\
(f^{|x>})^{\mo} &= (f^{\mo})^{<x|}
\end{flalign*}
so this defines an equivalence between the left and right actions.

As the two actions are equivalent it is reasonable to look at left actions
via our initial less cumbersome notation and use \( ((f^\mo)^x)^\mo \) to 
refer to the right action should this be necessary.

The left and right actions will be identical if for all \( x \in G \) we
have  \( (f^\mo)^x = (f^x)^\mo \), in other words if and only if function 
conjugation commutes with the inverse map.  

A function with the property that \( f^\mo = f \) is called 
\textbf{inverse preserving} as it preserves the relationship 
of being inverse. Inverse preserving functions are easy to construct. 
Furthermore any odd collection of functions which is closed under 
the map \( f \mapsto f^\mo \) must contain an inverse preserving function.

Note that if \( f \) is inverse preserving then in general \( f^a \) need 
not be inverse preserving. However it is the case that \( \mo^a \) is 
inverse preserving for all \( a \in G \).  If all function conjugates of 
\( f \) are inverse preserving then we say that \( f \) is 
\textbf{strongly inverse preserving}. Strongly inverse preserving functions 
were first introduced in~\cite{nilseries} although little is done with them in 
that paper. The property of being strongly inverse preserving is an interesting
and surprisingly strong constraint on \( f \) as we shall show in the next 
section.

If \( f \) is strongly inverse preserving then \( f^1 \) will be inverse 
preserving, however \( f \) itself need not be inverse preserving.
The left multiplication function \( x \mapsto ax \) where \( a \ne 1 \) 
is a strongly inverse preserving function which is not inverse preserving. 
It is usual to consider only identity preserving functions when working with
function actions in which case \( f^1 = f \) and hence strongly identity
preserving implies identity preserving in this case.

If \(f \) is identity preserving and strongly inverse preserving then 
the left and right conjugation actions on \( f \) are the same. Conversely 
an inverse preserving function for which left and right actions are the 
same will be strongly inverse preserving.

\bigskip
\pagebreak[2]

\section{Sandwich Morphisms and Structures}

We call a function \( f : G \rightarrow H \) between groups a 
\textbf{sandwich morphism} if \( f(ab^{-1}a) = f(a)f(b)^{-1}f(a) \) for all 
\( a,b \in G \). Initial examples are homomorphisms and the inverse function.
\begin{proposition}
A function \( f:G\rightarrow H \) between finite groups is strongly inverse 
preserving if and only if it is a sandwich morphism.
\end{proposition}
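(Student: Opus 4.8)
The plan is to translate the hypothesis ``every function conjugate $f^a$ is inverse preserving'' into a single identity satisfied by $f$, and then to recognise that identity, after a change of variables, as the sandwich morphism identity; the two directions of the biconditional then become the same computation read forwards and backwards.

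First I would expand the definitions. For any function $g$ one has $g^{\mo}(x) = g(x^{-1})^{-1}$, so applying this to $g = f^a$ and using $f^a(x) = f(a)^{-1}f(ax)$ gives
\[
(f^a)^{\mo}(x) = f^a(x^{-1})^{-1} = \bigl(f(a)^{-1}f(ax^{-1})\bigr)^{-1} = f(ax^{-1})^{-1}f(a).
\]
Hence $f^a$ is inverse preserving (that is, $(f^a)^{\mo} = f^a$) exactly when $f(ax^{-1})^{-1}f(a) = f(a)^{-1}f(ax)$ for all $x \in G$, which after multiplying on the left by $f(a)$ reads
\[
f(ax) = f(a)\,f(ax^{-1})^{-1}\,f(a).
\]
Therefore $f$ is strongly inverse preserving if and only if this identity holds for all $a,x \in G$.

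Next I would change variables. For each fixed $a \in G$ the assignment $x \mapsto b := ax^{-1}$ is a bijection of $G$, with inverse $b \mapsto x := b^{-1}a$, and under it $ax = ab^{-1}a$ while $ax^{-1} = b$. Substituting, the displayed identity becomes $f(ab^{-1}a) = f(a)f(b)^{-1}f(a)$, which is precisely the defining identity of a sandwich morphism; conversely, substituting $b = ax^{-1}$ into the sandwich identity returns the displayed identity. Since $b$ ranges over all of $G$ as $x$ does, the two universally quantified statements are literally the same, which proves the proposition.

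I do not expect a genuine obstacle. The one point deserving care is that ``strongly inverse preserving'' demands that $f^a$ be inverse preserving for \emph{every} $a \in G$ --- in particular for $a = 1$, where $f^1$ need not coincide with $f$ --- so the condition extracted in the first step is honestly a two-variable identity, and one should check that the substitution $x \leftrightarrow b$ is a bijection for each fixed $a$ so that the quantifier over $a,x$ matches the quantifier over $a,b$. Finiteness of $G$ and $H$ is not used anywhere beyond being the ambient standing assumption.
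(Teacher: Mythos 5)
Your proof is correct and follows essentially the same route as the paper: expand \( (f^a)^{\mo} \), rearrange to the identity \( f(ax) = f(a)f(ax^{-1})^{-1}f(a) \), and substitute \( b = ax^{-1} \) to obtain the sandwich identity. The only cosmetic difference is that you run both directions through the single observation that the substitution is a bijection for each fixed \( a \), whereas the paper verifies the converse by a separate direct computation of \( f^a(b^{-1}) \); the content is the same.
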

\begin{proof}
If \( f \) is strongly inverse preserving then 
\( f^a(x^{-1}) = \left(f^a(x)\right)^{-1} \) and hence 
\( f(a)^{-1}f(ax^{-1}) = \left(f(a)^{-1}f(ax)\right)^{-1} \) for all 
\( a,x\in G \). Rearranging we obtain
\begin{equation}
f(ax)f(a)^{-1}f(ax^{-1}) = f(a)f(ax^{-1})^{-1}f(a)
\end{equation}
and substituting \( x = b^{-1}a \) then gives the sandwich morphism property.

Conversely if \( f \) is a sandwich morphism then
\begin{flalign*}
f^a(b^{-1}) &= f(a)^{-1}f(ab^{-1}) \\
            &= f(a)^{-1}f(a(ab)^{-1}a) \\
            &= f(ab)^{-1}f(a) \\
            &= \left(f(a)^{-1}f(ab)\right)^{-1}= \left(f^a(b)\right)^{-1}
\end{flalign*}
and we can conclude that \( f \) is strongly inverse preserving as claimed.
\end{proof}

Sandwich morphisms are those functions which preserve the structure in the 
group specified by the binary operation \( (a,b) \mapsto a.b^{-1}.a \). 
We call this binary operation the \textbf{sandwich product} and the 
structure that it imposes on a group will be called the sandwich structure. 

We wish to study the sandwich product as a binary operation and to 
facilitate this we will denote it as simply \( ab \); using \( a.b \) 
to denote the usual group operation. Hence \( ab = a.b^{-1}.a \).

\begin{proposition}
\label{groupsandwich}
 The sandwich product in a finite group \( G \) has the 
following properties
\begin{description}
\item[Left Distributivity (LD)]
\( (ab)(ac) = a(bc) \) for all \( a,b,c \in G \).
\item[Idempotency (II)] \( aa = a \) for all \( a \in G \).
\item[Left Involutary (LI)] \( a(ab) = b \) for all \( a,b \in G \).
\item[Left Symmetry (LS)] If \( ab = b \) then \( ba = a \). 
\end{description}  
\end{proposition}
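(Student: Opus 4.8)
The plan is to translate each of the four properties into a statement purely about the ambient group operation via the definition \( ab = a.b^{-1}.a \), and then to verify it by a short calculation using only the group axioms. I note in passing that finiteness is not actually needed for this proposition; it is presumably carried along only for uniformity with the rest of the paper. A useful preliminary observation, used repeatedly below, is that \( (ab)^{-1} = (a.b^{-1}.a)^{-1} = a^{-1}.b.a^{-1} \).

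\textbf{(II)} is immediate: \( aa = a.a^{-1}.a = a \). \textbf{(LI)} is nearly as quick: \( a(ab) = a.(ab)^{-1}.a = a.(a^{-1}.b.a^{-1}).a = b \). For \textbf{(LD)} I would expand both sides to a common word. The left side is
\[ (ab)(ac) = (ab).(ac)^{-1}.(ab) = (a.b^{-1}.a).(a^{-1}.c.a^{-1}).(a.b^{-1}.a), \]
and the interior cancellations \( a.a^{-1} \) and \( a^{-1}.a \) collapse this to \( a.b^{-1}.c.b^{-1}.a \). The right side is \( a(bc) = a.(bc)^{-1}.a = a.(b.c^{-1}.b)^{-1}.a = a.(b^{-1}.c.b^{-1}).a \), which is the same word. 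Hence \( (ab)(ac) = a(bc) \).

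The one part that is not a pure mechanical expansion is \textbf{(LS)}, and that is where I expect whatever small difficulty there is to lie. The hypothesis \( ab = b \) reads \( a.b^{-1}.a = b \) in \( G \). Substituting this identity for the two occurrences of \( b \) on the left-hand side of the desired conclusion \( ba = a \), i.e.\ of \( b.a^{-1}.b = a \), gives
\[ b.a^{-1}.b = (a.b^{-1}.a).a^{-1}.(a.b^{-1}.a) = a.b^{-1}.a.b^{-1}.a = a.b^{-1}.(a.b^{-1}.a) = a.b^{-1}.b = a, \]
where the last two steps again use the hypothesis. Alternatively — and this is the conceptual reason LS holds — \( ab = b \) is equivalent to \( (a.b^{-1})^2 = 1 \), which is symmetric under \( a \leftrightarrow b \) up to inversion and so is in turn equivalent to \( (b.a^{-1})^2 = 1 \), i.e.\ to \( ba = a \); this even yields the converse implication for free. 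Either route completes the proof.
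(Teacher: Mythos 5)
Your proof is correct and takes exactly the approach the paper intends: the paper's proof is simply the remark that all four properties ``can be directly checked by expanding out in terms of the group product,'' and your calculations (including the observation that LS amounts to \( (a.b^{-1})^2 = 1 \), which is symmetric) supply those checks in full. Your aside that finiteness is not needed is also accurate.
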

\begin{proof}
All these properties can be directly checked by expanding out 
in terms of the group product.
\end{proof}

Note that there is no mention of identities or inverses. Clearly in a 
group we have identities and inverses, but we cannot distinguish them from
other elements using only the sandwich product. The sandwich product does
relate them however. If we know the identity element \( 1 \) then we can
recover the inverses from the sandwich product by defining \( x^{-1} = 1x \).

The obvious next step is to throw away the group. This leads us to make 
the following definition.

\begin{definition}
A \textbf{sandwich} is a set \( G \) with a binary operation denoted
\( ab \in G \) which satisfies the properties in 
proposition~\ref{groupsandwich}.
A \textbf{subsandwich} \( T \le S \) of a sandwich \( S \) is
a subset which is itself a sandwich. This will be the case if and 
only if \( T \) is closed under the sandwich operation.
\end{definition}

In more technical terms a sandwich is a left involutary left distributive 
left symmetric idempotent magma. The properties as stated are independent 
in the sense that none can be proved from the others. Counterexamples of small
order which demonstrate this were generated using the program Mace4 by 
W.McCune~\cite{prover9}. These are described by matrices which specify 
Cayley tables for the binary operation in which elements are labelled 
\( 0 \), \( 1 \) , \( 2 \) etc and are listed in figure~\ref{ex1}.

\begin{figure}
\centering
\begin{subfigure}[b]{4em}
$ \left(\begin{array}{cc}
1&0\\1&0
\end{array}\right)$
\caption{Not II}
\end{subfigure}
\quad
\begin{subfigure}[b]{6em}
$\left(\begin{array}{ccc}
0&1&2\\2&1&0\\0&1&2
\end{array}\right)$
\caption{Not LS}
\end{subfigure}
\quad
\begin{subfigure}[b]{8em}
$\left(\begin{array}{cccc}
0&2&3&1\\3&1&0&2\\1&3&2&0\\2&0&1&3
\end{array}\right)$
\caption{Not LI}
\end{subfigure}
\quad
\begin{subfigure}[b]{10em}
$\left(\begin{array}{ccccc}
0&1&2&3&4\\0&1&3&2&4\\0&3&2&1&4\\1&0&4&3&2\\3&1&2&0&4
\end{array}\right)$
\caption{Not LD}
\end{subfigure}
\caption{Cayley tables for counterexamples. Each has all 
of the sandwich properties except the indicated one.}
\label{ex1}
\end{figure}

\begin{proposition}
The following identities hold for all \( a,b,c \) in a sandwich \( S \).
\begin{enumerate}
\item \( ab = ac \Rightarrow b = c \).
\item \( (ab)c = a(b(ac)) \) for all \( a,b,c \in S \).
\item \( (ab)c = bc \Leftrightarrow (ba)c = ac \) for all \( a,b,c \in S \).
\end{enumerate}
\end{proposition}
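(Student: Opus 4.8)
The plan is to derive all three parts from the axioms, the workhorse being (LI): it says exactly that the left-translation map $L_a\colon x\mapsto ax$ is its own inverse, hence a bijection. None of the three parts will actually need (II) or (LS); (LD) together with (LI) suffices. For part (1), apply $L_a$ to both sides of $ab=ac$: by (LI) the left side is $a(ab)=b$ and the right side is $a(ac)=c$, so $b=c$ (this merely records that the involution $L_a$ is injective). For part (2), rewrite the outer factor using (LI) as $c=a(ac)$, so that $(ab)c=(ab)(a(ac))$, and then apply (LD) with its second and third arguments instantiated to $b$ and $ac$ to obtain $(ab)(a(ac))=a(b(ac))$; hence $(ab)c=a(b(ac))$.

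For part (3), use part (2) to replace $(ab)c$ by $a(b(ac))$, so that the equation $(ab)c=bc$ reads $a(b(ac))=bc$. Applying $L_a$ to both sides — an equivalence in both directions by part (1) — and simplifying the left side by (LI) (in the form $a(aX)=X$), one sees that $(ab)c=bc$ holds if and only if $b(ac)=a(bc)$. Running the identical argument with the roles of $a$ and $b$ exchanged shows that $(ba)c=ac$ holds if and only if $a(bc)=b(ac)$. These two conditions are literally the same equation, so $(ab)c=bc \Leftrightarrow (ba)c=ac$ as required.

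There is no real obstacle here; the only step to state carefully is the cancellation used in part (3), namely $aX=aY \Leftrightarrow X=Y$, which is exactly part (1) (equivalently, (LI) applied twice). It is also worth noting that the genuinely symmetric reformulation $a(bc)=b(ac)$ of the condition in part (3) is the conceptual content of the statement and is what makes the claimed equivalence transparent.
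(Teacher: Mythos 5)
Your proof is correct and follows essentially the same route as the paper's: part (1) by applying $L_a$ and using (LI), part (2) by rewriting $c=a(ac)$ and invoking (LD), and part (3) by reducing both sides to the symmetric condition $b(ac)=a(bc)$ via part (2) and left cancellation. The only cosmetic difference is that the paper completes the chain of equivalences forward to $(ba)c=ac$ rather than appealing to the $a\leftrightarrow b$ symmetry at the midpoint, but the content is identical.
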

\begin{proof} These follow directly from the definition of a sandwich.
\begin{enumerate} 
\item \( ab = ac \Rightarrow a(ab) = a(ac) \Rightarrow b = c \)
\item \( (ab)c = (ab)(a(ac)) = a(b(ac)) \)
\item \( (ab)c = bc \Leftrightarrow a(b(ac)) = bc 
                    \Leftrightarrow b(ac)) = a(bc)
                    \Leftrightarrow ac = b(a(bc)) 
                    \Leftrightarrow ac = (ba)c \)
\end{enumerate}
\end{proof}
The first identity is the left cancellative (LC) property which we proved 
from the left involutary property ( LI $\Rightarrow$ LC ). This raises the 
question of whether the converse is also true for magmas satisfying the 
other sandwich properties. This is not the case as is demonstrated by the
non LI example in figure~\ref{ex1} which is LC.

Since sandwiches are left cancellative multiplication from the left
is transitive in a sandwich, which can be useful. Sandwiches are not 
usually right cancellative however. In particular sandwiches can contain
right zero elements. Indeed a sandwich can consist only of right zero 
elements as we now show.

A right zero semigroup is a set with the multiplication \( ab = b \).
These are sandwiches as one can directly check. We will call them 
\textbf{right zero sandwiches}. There is a unique right zero sandwich 
of every order. A right zero sandwich with at least 2 elements is not
right cancellative since \( ab = bb = b \).

The left distributive property tells us that multiplication from 
the left is a sandwich automorphism since \( a(xy) = (ax)(ay) \). Moreover 
it is an involution since \( a(ax) = x \). The automorphisms of a sandwich 
form a group just like the automorphisms of any algebraic object. Hence
we have a natural map from a sandwich to its automorphism group defined 
by mapping the element \( a \in S \) to the left multiplication function 
\( L_a : x \mapsto ax \). What are the properties of this natural map?

We have \( L(ab)(x) = (ab)x = (ab)(a(ax)) = a(b(ax)) = L(a)L(b)L(a)(x) \).
Furthermore since \( L(b) = L(b)^{-1} \) we can write 
\begin{equation}
L(ab) = L(a)L(b)^{-1}L(a)
\end{equation}
Hence this natural map is a sandwich homomorphism to the sandwich structure of 
the automorphism group. We will call a sandwich a \textbf{group sandwich} if
it is the sandwich structure of a group. And we will call a sandwich a 
\textbf{group subsandwich} if it is a subsandwich of a group sandwich. Hence 
left multiplication defines a sandwich homomorphism which maps an arbitrary 
sandwich onto a group subsandwich.

The congruences of this map on a sandwich \( S \) are equivalence classes 
under the relation \( a \sim b \) iff \( L(a) = L(b) \) which means 
\( ax = bx \) for all \( x \in S \). In particular this is true for 
\( x = b \) which gives \( ab = b \). 
 
Hence a congruence class is a subsandwich \( T \subseteq S \) where 
\( ab = b \) for all \( a,b \in T \). Thus the congruences for this map 
are right zero sandwiches.

We have proved
\begin{theorem}\label{clubofr0}
Every sandwich is a group subsandwich of right zero sandwiches.
\end{theorem}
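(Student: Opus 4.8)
The plan is to build the whole argument around the natural map $L : S \to \operatorname{Aut}(S)$, $a \mapsto L_a$, $L_a(x) = ax$, which the computations just above have already shown to be a sandwich homomorphism: left distributivity (LD) makes each $L_a$ an endomorphism, the left involutary law (LI) makes $L_a$ its own inverse and hence an automorphism, and the identity $(ab)c = a(b(ac))$ yields $L(ab) = L(a)L(b)^{-1}L(a)$, i.e. $L$ is multiplicative into the sandwich structure of the group $\operatorname{Aut}(S)$. Since the image of a sandwich homomorphism is a subsandwich, and a subsandwich of a group sandwich is by definition a group subsandwich, $\Im L$ is a group subsandwich and $L$ maps $S$ onto it. So all the remaining work is to identify the fibres of $L$ and show they are right zero sandwiches.

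First I would record the routine fact that for any magma homomorphism the relation $a \sim b \iff L(a) = L(b)$ is a congruence, with $S/{\sim} \iso \Im L$. Unwinding the definition, $a \sim b$ says $ax = bx$ for every $x \in S$; specialising to $x = b$ and using idempotency (II) gives $ab = bb = b$. Hence on any single $\sim$-class $T$ we have $ab = b$ for all $a,b \in T$. This one identity does two jobs at once: it shows $T$ is closed under the sandwich product, so $T$ is a genuine subsandwich, and it shows that as a subsandwich $T$ obeys the right zero law $ab = b$, i.e. $T$ is a right zero sandwich.

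Assembling the pieces: $S$ is partitioned into its $\sim$-classes, each of which is a right zero sandwich, and the induced quotient by that partition is the group subsandwich $\Im L$. This is exactly the claim that an arbitrary sandwich decomposes as a group subsandwich ``of'' — sitting above — right zero sandwiches.

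I do not anticipate a genuine obstacle, since all the force has been front-loaded into the earlier propositions and into the $L(ab) = L(a)L(b)^{-1}L(a)$ computation. The only points needing care are expository: stating precisely what ``group subsandwich of right zero sandwiches'' is meant to assert (a congruence decomposition with right zero classes and group-subsandwich quotient), and noting the one-line closure check, $ab = b \in T$, that upgrades each class from a mere subset to an honest subsandwich.
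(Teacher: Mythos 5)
Your proof is correct and takes essentially the same route as the paper: the paper also establishes the theorem via the natural map \( L : S \rightarrow \operatorname{Aut}(S) \), the identity \( L(ab) = L(a)L(b)^{-1}L(a) \) showing the image is a group subsandwich, and the specialisation \( x = b \) in \( ax = bx \) to show each congruence class satisfies \( ab = b \) and is therefore a right zero sandwich. The only difference is expository; you make the quotient/fibre bookkeeping slightly more explicit than the paper does.
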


As a consequence of this theorem any sandwich which has no non-trivial 
right zero subsandwiches must be a group subsandwich. Hence any condition 
on a sandwich that prevents it from having non-trivial right zero 
subsandwiches will result in it being a group subsandwich.

\begin{corollary}
A right cancellative sandwich is a group subsandwich.
\end{corollary}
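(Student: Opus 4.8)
The plan is to read this off Theorem~\ref{clubofr0} together with the observation, already flagged in the text, that any condition which forbids non-trivial right zero subsandwiches forces a sandwich to be a group subsandwich. So the entire content of the proof reduces to one small verification: a right cancellative sandwich has no right zero subsandwich with more than one element.

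First I would dispose of that verification. Suppose $T \le S$ is a right zero subsandwich and $a,b \in T$. Then $ab = b$ (right zero law) and $bb = b$ (idempotency, or again the right zero law), so $ab = bb$, and right cancellativity gives $a = b$. Hence $T$ is a singleton. Next I would invoke the discussion preceding the theorem: the natural map $a \mapsto L_a$, where $L_a : x \mapsto ax$, is a sandwich homomorphism onto a group subsandwich, and two elements $a,b$ lie in the same congruence class of this map precisely when $ax = bx$ for all $x \in S$; taking $x = b$ shows each such class is a right zero subsandwich. By the first step every congruence class is a singleton, so $a \mapsto L_a$ is injective, hence an isomorphism of $S$ onto its image. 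That image is a subsandwich of the sandwich structure of $\operatorname{Aut}(S)$, i.e. a group subsandwich, and therefore $S$ itself is (isomorphic to) a group subsandwich.

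I do not expect a genuine obstacle here; the corollary really is a one-line consequence of the theorem once the right zero subsandwiches are ruled out. The only point I would be slightly careful about is making sure the congruence classes are right zero subsandwiches in the form the argument needs, namely that $ab = b$ for \emph{all} $a,b$ in a class $T$ and not merely for a single pair --- but this is exactly what the text establishes by specialising $ax = bx$ to $x = b$. With that in hand, the proof is immediate.
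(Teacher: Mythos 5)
Your proof is correct and follows exactly the route the paper intends: right cancellativity rules out non-trivial right zero subsandwiches (the paper itself notes that $ab = bb = b$ fails right cancellation), so the congruence classes of the natural map $a \mapsto L_a$ are singletons and the map embeds $S$ into the sandwich structure of its automorphism group. This matches the paper's argument, which presents the corollary as an immediate consequence of Theorem~\ref{clubofr0} and the preceding discussion.
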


What does this natural map look like if we apply it to a group?
\medskip

Let \( G \) be a group with sandwich product \( ab = a.b^{-1}.a \) and 
consider the congruence on \( G \) defined by the natural map into the 
sandwich automorphism group. If \( a \sim b \) then \( L(a) = L(b) \) and 
\( a.x^{-1}.a = b.x^{-1}.b \) for all \( x \in G \). 
 
Setting \( x = a \) or \( x = b \)  gives \( a.b^{-1} = b.a^{-1} \) and also
\( b^{-1}.a = a^{-1}.b \) . Let \( a.b^{-1} = e \) and \( a^{-1}.b = f \) so 
that \( e^2 = 1 \) and \( f^2 = 1 \). Then for all \( x \in G \) we must have 
\( x^{-1}.e = f.x^{-1} \) . Setting \( x = 1 \) gives \( e = f \) and hence
\( ex = xe \) for all \( x \in G \). It follows that \( e \in Z(G) \). 

Thus if \( a \) and \( b \) are equivalent then \( b = e.a \) for some 
element \( e \in Z(G) \) with \( e^2 = 1 \). Conversely if \( e \) is an 
element with these properties then \( a \) is equivalent to \( e.a \) for all 
\( a \in G \). This proves the following.

\begin{proposition} The congruences of the natural map from the sandwich
structure of a group into the sandwich structure of its sandwich automorphism
group are cosets of the subgroup consisting of all central elements of order
\( 1 \) or  \( 2 \).
\end{proposition}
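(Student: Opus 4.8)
The plan is to pin down exactly when two group elements \( a,b \) lie in the same congruence class of the natural map \( L \), and then recognise the resulting set as a coset. By definition \( a\sim b \) means \( L(a)=L(b) \), i.e. \( a.x^{-1}.a = b.x^{-1}.b \) for every \( x\in G \). The first step is to extract information by specialising \( x \). Taking \( x=b \) collapses the cubic identity to \( a.b^{-1}.a = b \), and taking \( x=a \) gives \( a = b.a^{-1}.b \); rearranging these two yields \( a.b^{-1}=b.a^{-1} \) and \( b^{-1}.a = a^{-1}.b \). Writing \( e=a.b^{-1} \) and \( f=a^{-1}.b \), each relation says precisely \( e=e^{-1} \), respectively \( f=f^{-1} \), so \( e^2=f^2=1 \).

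Next I would feed this back into the full identity. Left-multiplying \( a.x^{-1}.a = b.x^{-1}.b \) by \( a^{-1} \) and then right-multiplying by \( b^{-1} \) turns it into \( x^{-1}.e = f.x^{-1} \) for all \( x\in G \). Putting \( x=1 \) forces \( e=f \), and the relation then reads \( x^{-1}.e = e.x^{-1} \) for all \( x \), i.e. \( e\in Z(G) \). Hence \( a\sim b \) implies \( b = e^{-1}.a = e.a \) with \( e \) a central element of order dividing \( 2 \). The set \( H \) of such \( e \) is visibly a subgroup of \( Z(G) \) — it is exactly the \( 2 \)-torsion of the centre, closed under products and inverses — so each congruence class is contained in a coset \( Ha=aH \).

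For the reverse inclusion I would simply verify that every such translate really is equivalent to \( a \): if \( e\in H \) then \( (e.a).x^{-1}.(e.a) = e^2.a.x^{-1}.a = a.x^{-1}.a \), using centrality of \( e \) and \( e^2=1 \), so \( L(e.a)=L(a) \) and \( e.a\sim a \). Combined with the previous paragraph this shows the congruence class of \( a \) is exactly \( Ha \), which is the claim. I do not anticipate any genuine obstacle; the only point needing care is the bookkeeping in the first paragraph — checking that the two specialisations \( x=a,b \) really do deliver \( e^2=f^2=1 \) and that no further relations are needed — together with the observation at \( x=1 \) that \( e \) and \( f \) coincide, after which centrality drops out immediately.
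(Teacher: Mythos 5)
Your argument is correct and follows essentially the same route as the paper: specialise the identity \( a.x^{-1}.a = b.x^{-1}.b \) at \( x=a \) and \( x=b \) to produce the involutions \( e=a.b^{-1} \) and \( f=a^{-1}.b \), use \( x=1 \) to force \( e=f \) and hence centrality, and then check the converse translate computation. Your writeup is if anything slightly more complete, since you verify explicitly that \( H \) is a subgroup and that \( L(e.a)=L(a) \), steps the paper only asserts.
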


\begin{corollary} If \( G \) has no central elements of order \( 2 \) then
the natural map into its sandwich automorphism group is 1-1. 
\end{corollary}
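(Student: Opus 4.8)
The plan is to read this off directly from the preceding proposition, using the elementary fact that a homomorphism between algebraic structures is injective exactly when its kernel congruence is the trivial (diagonal) relation. So the first step is to recall the setup: the natural map in question is $L\colon G\to\operatorname{Aut}(S)$, $a\mapsto L_a$ where $L_a(x)=ax$ in the sandwich product of $G$, and by the discussion leading to the proposition its congruence is the relation $a\sim b\iff L(a)=L(b)$, whose classes were shown to be precisely the cosets of the subgroup $K=\{e\in Z(G):e^2=1\}$.

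The second step is the specialisation. If $G$ has no central element of order $2$, then $K$ consists only of the identity, so $K=\{1\}$ and each coset $eK=\{e\}$ is a singleton. Hence the congruence $\sim$ coincides with equality on $G$: from $L(a)=L(b)$ we get $a\sim b$, so $a$ and $b$ lie in the same class, forcing $a=b$. Equivalently, distinct elements of $G$ have distinct left-multiplication functions, so $L$ is $1$-$1$, which is exactly the claim.

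I do not expect any real obstacle here; the substantive content was already extracted in proving the proposition (in particular, identifying the congruence classes with cosets of the central $2$-torsion subgroup), and this corollary is a one-line consequence. The only point worth a word of care is to make explicit that ``$1$-$1$'' means injectivity of $L$ as a map of sets, not merely faithfulness of the induced quotient action, but triviality of the congruence delivers precisely that.
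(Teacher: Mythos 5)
Your argument is correct and is exactly the one the paper intends: the corollary is stated as an immediate consequence of the preceding proposition, and specialising the congruence classes (cosets of the central elements of order $1$ or $2$) to the case where that subgroup is trivial gives injectivity of $L$. No difference in approach worth noting.
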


If the natural map is 1-1 then it gives a sandwich isomorphism onto its 
image. The image of this natural map consists of all elements of the form
\( L(a) \) in the sandwich automorphism group. However all these elements
have order \( 2 \) since \( L(a)L(a)(x) = a(ax) = x \).

This is very interesting. Effectively we have a sandwich isomorphism onto 
a subsandwich of a group, and all the elements of this subsandwich are 
group elements of order 1 or 2. Of course the subset of all elements of 
order $1$ or $2$ in a group need not constitute a subgroup. However it 
is a subsandwich.

\begin{proposition}
Let \( G \) be a group and let \( 2(G) = \{ x \in G : x^2 = 1 \} \). 
Then \( 2(G) \) is a subsandwich of \( G \).
\end{proposition}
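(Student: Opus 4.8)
The plan is to use the criterion recorded in the definition of a sandwich: a subset $T \le S$ is a subsandwich precisely when it is closed under the sandwich operation, so all that is required is to show that $2(G)$ is closed under $(a,b)\mapsto ab = a.b^{-1}.a$. Concretely, I would fix $a,b \in 2(G)$ and verify that $ab \in 2(G)$, i.e. that $(ab)^2 = 1$ in $G$.

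The computation is immediate. Since $a^2 = b^2 = 1$ we have $a^{-1} = a$ and $b^{-1} = b$, so the sandwich product simplifies to $ab = a.b^{-1}.a = a.b.a$. Then, working in $G$,
\[
(ab)^2 = (a.b.a).(a.b.a) = a.b.a^2.b.a = a.b.b.a = a.b^2.a = a.a = a^2 = 1,
\]
where the third equality uses $a^2 = 1$ and the fifth uses $b^2 = 1$. Hence $ab \in 2(G)$, and $2(G)$ is closed under the sandwich product.

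There is essentially no obstacle here; the only point worth a remark is conceptual rather than computational, namely that one does not need to separately re-check the axioms LD, II, LI, LS for $2(G)$, since — as noted after the definition of subsandwich — closure under the operation is already sufficient, these identities being inherited from the ambient group sandwich $G$. So the proof reduces entirely to the one-line closure check above.
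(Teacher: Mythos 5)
Your proof is correct and follows exactly the same route as the paper: reduce to closure under the sandwich product and check that \( (a.b^{-1}.a)^2 = 1 \) when \( a^2 = b^2 = 1 \). You simply spell out the computation that the paper leaves implicit.
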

\begin{proof}
It is enough to show that \( 2(G) \) is closed under the sandwich product. 
But if \( a^2 = 1 \) and \( b^2 = 1 \) then \( (a.b^{-1}.a)^2 \) = 1 so
this is true.
\end{proof}

The natural map takes a sandwich into this subsandwich of its sandwich 
automorphism group.
\medskip

We next consider the question of whether all sandwiches must arise from the 
sandwich structures of groups.

\begin{proposition}\label{r0club}
All right zero sandwiches are group subsandwiches.
\end{proposition}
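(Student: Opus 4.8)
The plan is to realise an arbitrary right zero sandwich concretely as a set of pairwise commuting involutions inside a group, and to observe that on such a set the group sandwich product collapses to the right zero product.

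First I would record the elementary computation underlying everything: if $x,y$ are elements of a group with $x^2 = y^2 = 1$ and $xy = yx$, then the sandwich product is $x\cdot y^{-1}\cdot x = xyx = yxx = y$. In particular, on any set of pairwise commuting involutions in a group $G$ — which, by the earlier proposition, sits inside the subsandwich $2(G)$ — the sandwich operation is precisely $xy = y$. So any such set, with the group sandwich product, \emph{is} a right zero sandwich, and it is a subsandwich of the group sandwich of $G$, hence a group subsandwich.

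It then remains to exhibit, for a right zero sandwich on an arbitrary set $R$, a group containing a copy of $R$ as a set of distinct pairwise commuting involutions. Here I would take $G = \bigoplus_{a\in R}\Z/2\Z$, the direct sum of copies of $\Z/2\Z$ indexed by the elements of $R$, and let $e_a\in G$ be the generator supported in coordinate $a$. The $e_a$ are pairwise distinct, each has order $2$, and $G$ is abelian, so $\{e_a : a\in R\}$ is a subsandwich of the group sandwich of $G$ on which $e_a e_b = e_b$. The bijection $a\mapsto e_a$ therefore respects the sandwich products, i.e. it is a sandwich isomorphism of the right zero sandwich on $R$ onto a group subsandwich. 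Since there is a unique right zero sandwich of each order, this proves the proposition.

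The main obstacle is essentially nonexistent: the only point requiring a little care is supplying enough distinct commuting involutions for a right zero sandwich of arbitrary (possibly infinite) cardinality, and indexing the summands of $G$ by $R$ itself sidesteps any cardinal arithmetic and makes the count automatic.
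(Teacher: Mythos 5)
Your proof is correct and takes essentially the same approach as the paper: both embed the right zero sandwich into an elementary abelian $2$-group, where the sandwich product $a.b^{-1}.a$ collapses to $b$. The only cosmetic difference is that you index the $\Z/2\Z$ summands by the set $R$ itself, whereas the paper picks $2^k\ge n$ and selects a subset of the right size.
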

\begin{proof}
Consider the right zero sandwich of order \( n \).  This is a set of \( n \) 
elements with the product \( ab = b \). There is only one such algebraic 
structure up to isomorphism.

Let \( G \) be an elementary abelian \( 2 \)-group of order \( 2^k \) 
with \( 2^k \ge n \). Then the sandwich product in \( G \) is 
\( ab = a.b^{-1}.a = a^2.b^{-1} = 1.b = b \) and thus the sandwich 
structure of \( G \) is a right zero sandwich of order \( 2^k \). 
Every subset of a right zero sandwich is a right zero subsandwich.
Since \( 2^k \ge n \) then \( G \) has a subset (and hence a right
zero subsandwich) of order \( n \). IT follows that the right zero 
sandwich of order \( n \) is a group subsandwich as claimed.
\end{proof}

\begin{proposition}
A right zero sandwich is a group sandwich if and only if it has
order \( 2^k \) for some \( k \).
\end{proposition}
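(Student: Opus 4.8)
The plan is to reduce the statement to a question about groups of exponent two, and then count. The key point to keep in mind is the distinction between a \emph{group sandwich} and a \emph{group subsandwich}: by Proposition~\ref{r0club} every right zero sandwich is already a group subsandwich, so the content of this proposition lies entirely in the cardinality restriction. A sandwich $S$ is a group sandwich precisely when it coincides with the sandwich structure of some group $G$ on the \emph{same} underlying set; hence the right zero sandwich of order $n$ is a group sandwich if and only if there exists a group $G$ of order exactly $n$ whose sandwich product satisfies $a.b^{-1}.a = b$ for all $a,b\in G$.

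For the forward direction I would assume such a group $G$ exists and extract the exponent condition. Putting $b = 1$ into the identity $a.b^{-1}.a = b$ immediately gives $a^2 = 1$ for every $a \in G$, so $G$ has exponent dividing $2$. A standard argument then shows such a $G$ is abelian — from $(a.b)^2 = 1$ and $a^{-1} = a$, $b^{-1} = b$ one gets $a.b = b.a$ — and a finite abelian group of exponent $2$ is elementary abelian, so $|G| = 2^k$ for some $k$. Therefore $n = 2^k$.

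For the converse I would simply invoke the computation already carried out in the proof of Proposition~\ref{r0club}: if $n = 2^k$, take $G$ to be the elementary abelian $2$-group of order $2^k$, for which the sandwich product is $a.b^{-1}.a = a^2.b^{-1} = b$, so the sandwich structure of $G$ is exactly the right zero sandwich of order $2^k = n$. Thus this right zero sandwich is a group sandwich, completing the equivalence. There is really no hard step here; the only thing requiring care is not to be misled by Proposition~\ref{r0club} into thinking a right zero sandwich of arbitrary order is a group sandwich — it is a group \emph{subsandwich}, sitting inside the larger elementary abelian $2$-group, but it is only a group sandwich on the nose when its order is itself a power of $2$.
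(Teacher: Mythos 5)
Your proof is correct and follows essentially the same route as the paper's: the converse is the computation $a.b^{-1}.a = a^2.b^{-1} = b$ in an elementary abelian $2$-group, and the forward direction sets $b=1$ in the right zero identity to force $a^2=1$ for all $a$, whence the group is elementary abelian of order $2^k$. Your added remark distinguishing group sandwich from group subsandwich is accurate and matches the paper's intent, but it is not a different argument.
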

\begin{proof} The sandwich structure of an elementary abelian \( 2 \)-group
is the right zero sandwich of order \( 2^k \). So this is a group sandwich.

Conversely let \( S \) be a group of order \( n \) with 
sandwich product \( ab = b \) for all \(a,b \in S \). Then 
\( a1 = a.1^{-1}.a = 1 \) and so all elements of the group \( S \)
have order \( 2 \). It follows that \( S \) is an elementary abelian 
\( 2 \)-group and must therefore have order \( n = 2^k \) for some \( k \).
\end{proof}

\begin{corollary} Not all sandwiches are group sandwiches.
\end{corollary}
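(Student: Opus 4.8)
The plan is to invoke the immediately preceding proposition, which characterises exactly when a right zero sandwich is a group sandwich. Since this is stated as a corollary, essentially no new work is required: I only need to exhibit one sandwich that fails to be a group sandwich, and the right zero sandwiches supply an abundant family of examples.

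Concretely, I would first recall that right zero sandwiches exist in every order $n$ and genuinely are sandwiches — this was established earlier in the section where right zero semigroups were shown to satisfy (LD), (II), (LI) and (LS). Then I would fix any $n$ that is not a power of $2$; the smallest such is $n = 3$. The right zero sandwich $S$ of order $3$ is therefore a bona fide sandwich. By the previous proposition, $S$ is a group sandwich if and only if $3 = 2^k$ for some $k$, which is impossible. Hence $S$ is a sandwich that is not a group sandwich, and the corollary follows.

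There is no real obstacle here; the only thing worth being careful about is making sure the example cited is legitimately a sandwich and that I am quoting the "only if" direction of the preceding proposition correctly (the direction stating that a right zero sandwich which \emph{is} a group sandwich must have order $2^k$). One could equally phrase the argument without choosing a specific $n$: infinitely many right zero sandwiches (those of non-$2$-power order) fail to be group sandwiches, so the class of group sandwiches is a proper subclass of the class of sandwiches.
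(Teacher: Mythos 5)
Your proposal is correct and matches the paper's own proof, which likewise cites the right zero sandwich of order three as a counterexample via the preceding proposition on right zero sandwiches of order \( 2^k \). You simply spell out the appeal to the ``only if'' direction more explicitly than the paper does.
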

\begin{proof} A right zero semigroup of order three provides a counterexample.
\end{proof}

The question of whether all sandwiches are group subsandwiches remains 
open.  Theorem~\ref{clubofr0} and proposition~\ref{r0club} suggest that
if this is not true then it may not be easy to construct counterexamples.

\end{document}